\newcommand{\Phihat}{\widehat{\Phi}}
\newcommand{\sumstar}{\sideset{}{^{*}}\sum}
\newcommand{\chibar}{\overline{\chi}}
\newcommand{\R}{{\mathbb R}}
\newcommand{\C}{{\mathbb C}}
\newcommand{\eps}{{\varepsilon}}
\newtheorem{theorem}{Theorem}[section]
\newtheorem{theorem1}{Theorem}
\newtheorem*{corollary1}{Corollary A}
\newtheorem*{proposition1}{Proposition A}
\newtheorem{lemma}[theorem]{Lemma}
\newtheorem*{lemma1}{Lemma 5.1*}
\theoremstyle{definition}
\theoremstyle{remark}
\newtheorem*{remark1}{Remark}
\numberwithin{equation}{section}
\begin{document}

\title[Critical zeros of Dirichlet $L$-functions]{Critical zeros of Dirichlet $L$-functions}
\author{J.B. Conrey, H. Iwaniec and K. Soundararajan}
\begin{address}{American Institute of Mathematics and University of Bristol}
\end{address}
\begin{address}{Rutgers University}
\end{address}
\begin{address}{Stanford University}
\end{address}


\begin{abstract}
We use the Asymptotic Large Sieve and Levinson's method to obtain lower bounds for the proportion of simple zeros on the critical line of 
the twists by primitive Dirichlet characters of a fixed L-function of degree 1,2, or 3.
\end{abstract}

\maketitle

\section{Introduction}\label{section1}

In this paper we prove that at least $56\%$ of zeros of the family of Dirichlet $L$-functions
\begin{equation}\label{11}
L(s,\chi) = \sum_{1}^{\infty}\chi(n)n^{-s}
\end{equation}
are on the line $\Re s = 1/2$.  Therefore one may say that the Riemann Hypothesis for this family
is more likely to be true than not!

We are going to qualify this statement in asymptotic terms.  Let $\chi\pmod{q}$ be a primitive character.
The total number of zeros  $\rho = \beta + i\gamma$ of $L(s, \chi)$ with $0 < \beta < 1$ and $|\gamma| \leq T$,
say $N(T,\chi)$, is known asymptotically very precisely
\begin{equation}\label{12}
N(T,\chi) = \frac{T}{\pi}\log{\frac{qT}{2\pi e}} + O(\log{qT}), \quad T \geq 3.
\end{equation}
The number of these zeros with $\beta = 1/2$, say $N_{0}(T,\chi)$, is known to satisfy
\begin{equation}\label{13}
N_{0}(T,\chi) \gg N(T,\chi)
\end{equation}
provided $q$ is fixed and $T$ is sufficiently large in terms of $q$, where the implied constant is absolute.
This result in the case of the Riemann zeta function ($q=1$) is due to A. Selberg \cite{S}.
Selberg's method does not produce a considerable proportion of the critical zeros, contrary to the other method of
N. Levinson \cite{L} which can yield a respectful number of at least $34\%$.  In a series of works by B. Conrey
\cite{C1}, \cite{C2} and with others \cite{BCY} the method of Levinson has been explained conceptually,
clarified technically and substantially refined by means of new devices, leading to the current record of over
$41\%$ of the critical zeros (well, only for the Riemann zeta function, however the case of $L(s,\chi)$ is
not much different).  We shall follow the ideas of \cite{C1} and adapt its technology (such as handling the
approximate functional equation) to our needs.

There are two aspects when counting the zeros of $L(s,\chi)$; the $t$-aspect and the $q$-aspect, however we shall
focus only on the latter.  Actually we do perform a hybrid aspect, but we down-size its $t$-component because
our arguments do not benefit in this regard.

Using our construction we actually count the simple zeros (see Appendix).  Denote by $N_{0}'(T,\chi)$ the
number of simple zeros of $L(s,\chi)$, $\rho = 1/2 + i\gamma$ with $|\gamma| \leq T$, so
$N_{0}(T,\chi) \geq N_{0}'(T,\chi)$.

Let $\Psi(x)$ be a non-negative function, smooth, compactly supported on $\R^{+}$.  Put
\begin{equation}\label{14}
\mathcal{N}(T,Q) = \sum_{q}\frac{\Psi(q/Q)}{\varphi(q)}
\sumstar_{\chi (\bmod{q})}N(T,\chi)
\end{equation}
where $Q \geq 3$ and $T \geq 3$.  Here the superscript $*$ restricts the summation to the primitive characters.
Let $\mathcal{N}_{0}'(T,Q)$ denote the same sum, but with $N(T,\chi)$ replaced by $N_{0}'(T,\chi)$.

\begin{theorem1}\label{theorem1}
For $Q$ and $T$ with $(\log{Q})^{6} \leq T \leq (\log{Q})^{A}$ we have
\begin{equation}\label{15}
\mathcal{N}_{0}'(T,Q) \geq \frac{14}{25}\mathcal{N}(T,Q),
\end{equation}
where $A \geq 6$ is any constant, provided $Q$ is sufficiently large in terms of $A$.
\end{theorem1}

In some sense Levinson's approach to counting critical zeros starts from the opposite direction to that of Selberg.
Indeed, Selberg adds a zero between sign changes of a real function (a safe route), while Levinson subtracts unwanted
zeros from a total collection (a risk of getting negative outcome).  We shall give a sketch of Levinson's method in the
Appendix.  His approach begins with taking a suitable linear combination of $\zeta(s)$ and its derivative.
Likewise we take
\begin{equation}\label{16}
G(s,\chi) = L(s,\chi) + \lambda L'(s,\chi)
\end{equation}
with $\lambda = 1/r\log{\textbf{q}}$, where $r$ is a positive constant and for notational convenience we put
\begin{equation}\label{17}
\textbf{q} = q/\pi.
\end{equation}
This idea to take a more general linear combination of higher order derivatives has been fully developed in \cite{C1},
\cite{CG}.  However, by taking only $L(s,\chi)$ and $L'(s,\chi)$ we shall be also able to derive a lower bound for the
percentage of simple zeros.

At some point, after applying Littlewood's formula, one needs an upper bound for the integral
\begin{equation}\label{18}
\frac{1}{2T}\int_{-T}^{T}|G(\sigma + it,\chi)|^{2}dt
\end{equation}
over the vertical segment with $\sigma < 1/2$, $\sigma$ near $1/2$.  But such a straightforward treatment does not work,
because the extreme values of $G(s,\chi)$ make the second power moment \eqref{18} rather large.  These extreme large
values appear rarely, nevertheless they need to be mollified.  To this end (an idea first used by Selberg) we attach
to $G(s,\chi)$ a mollifying factor $M(s,\chi)$ before embarking to Littlewood's formula.  An experience shows that
a good choice is given by
\begin{equation}\label{19}
M(s,\chi) = \sum_{m \leq X}\mu(m)\chi(m)m^{-s}P(1- \frac{\log{m}}{\log{X}})
\end{equation}
where $P(x)$ is a smooth function with $P(0) = 0$ and $P(1) = 1$.  One may think of $M(s,\chi)$ as an approximation
to $1/G(s,\chi)$, however this view point must be considered with some reservation.  A comprehensive study of mollifiers
can be found in the survey articles and papers by the first author \cite{C1},\cite{C2},\cite{C3}.

Having said that, we are led to consider integrals of type
\begin{equation}\label{110}
I_{\chi} = \int|G(\sigma + it,\chi)M(\tfrac{1}{2} + it,\chi)|^{2}\Phi(t)dt
\end{equation}
instead of \eqref{18}.  Here we have also introduced a factor $\Phi(t)$ not for dampening large values of $G(s,\chi)$,
but exclusively for smoothing out the integration.  We assume that $\Phi(t)$ is smooth, $\Phi(t) \geq 0$ with
\begin{equation}\label{111}
\Phihat(1) = \int_{-\infty}^{\infty}\Phi(t)dt > 0
\end{equation}
and
\begin{equation}\label{112}
(1+|t|)^{j}\Phi^{(j)}(t) \ll \Big(1 + \frac{|t|}{T}\Big)^{-A}
\end{equation}
for any $j \geq 0$ and any $A \geq 0$, the implied constant depending on $j$ and $A$.  If desired, this smoothing factor
in \eqref{110} can be easily replaced by the sharp cut $|t| < T$ by exploiting the positivity features.  In this case
$\Phihat(1) = 2T$ while in general we think of having $\Phi(t)$ with $\Phihat(1) \asymp T$.

Due to the effect of mollification we expect that
\begin{equation}\label{113}
I_{\chi} \sim c~\Phihat(1) \qquad \mbox{ as } \, q \rightarrow \infty,
\end{equation}
if $(\log{q})^{6} \leq T \leq (\log{q})^{A}$ and $X = q^{\theta}$, $0 < \theta < 1$.  Here $c$ is a positive
constant which depends on the function $P$ in \eqref{19}, and it is the same one for every $\chi \pmod{q}$.
If there were a perfect mollifier one would guess that \eqref{113} holds with $c=1$, but it is not
going to happen.  Definitely $c > 1$.  Note that the $L$-functions in $G(s,\chi)$ run in \eqref{110} over the line
$\sigma < 1/2$, whereas the mollifier $M(s, \chi)$ appears on the critical line.  We leave this little mystery
for the reader's attention and contemplation.

Recall that the mollifier \eqref{19} is a Dirichlet polynomial of length $X = q^{\theta}$.  Naturally, the larger
$\theta$ is admitted the better mollification can be achieved, resulting in smaller value of $c$ in \eqref{113}, which
is our goal.  At the present state of technology we are unable to prove \eqref{113} for individual characters
$\chi \pmod{q}$ even for very short mollifiers.  However, by averaging over the characters we are able to get
\begin{equation}\label{114}
\frac{1}{\varphi^{*}(q)}\sumstar_{\chi (\bmod{q})}I_{\chi} \sim c~\Phihat(1)
\end{equation}
where $\varphi^{*}(q)$ denotes the number of primitive characters, $\varphi^{*} = \mu * \varphi$ (assume
$q \not\equiv 2 \pmod{4}$, or else $\varphi^{*}(q) = 0$).  With the amount of averaging $\varphi^{*}(q)$ we can
accept mollifiers of length $X = q^{1/2 - \eps}$.  The situation looks very much the same as in the Levinson
work on $\zeta(s)$ in the $t$-aspect, thus one can derive the analogous result in the $q$-aspect;
\begin{equation}\label{115}
\sumstar_{\chi (\bmod{q})}N_{0}(T,\chi) > 0.34\sumstar_{\chi (\bmod{q})}N(T,\chi).
\end{equation}
One can pursuit further along the lines of \cite{C1} allowing the mollifier of length $X = q^{4/7 - \eps}$ and
getting \eqref{115} with $0.34$ increased to $0.4$.

In this paper we introduce further averaging over the conductor $q$ getting a larger improvement as in Theorem
\ref{theorem1}.  This improvement comes from the fact that our mollifier has length $X = q^{1-\eps}$.
It seems we reached the limit of the mollification technology, with respect to the length, because it is unlikely
that a mollifier longer than the size of the conductor can be worked out unconditionally without recourse to the
Riemann Hypothesis.  Of course, some small improvements over $0.56$ are possible by shaping a bit $G(s,\chi)$
and $M(s,\chi)$.

For simplicity in this paper we take \eqref{19} with $P(x) = x$, that is
\begin{equation}\label{116}
M(s,\chi) = \sum_{m \leq X}\mu(m)\chi(m)m^{-s}\Big(1 - \frac{\log{m}}{\log{X}}\Big).
\end{equation}
In this special case we prove

\begin{theorem1}\label{theorem2}
Let $X = \textbf{q}^{\theta}$ with $0 < \theta < 1$, $\lambda = 1/r\log{\textbf{q}}$ with $r>0$ and $\sigma = \frac{1}{2} -
\frac{R}{\log{\textbf{q}}}$ with $R>0$.  Then
\begin{equation}\label{117}
\sum_{q}\frac{\Psi(q/Q)}{\varphi(q)}\sumstar_{\chi (\bmod{q})}I_{\chi} \sim
c(\theta, r, R)\Phihat(1)\sum_{q}\Psi(\frac{q}{Q})\frac{\varphi(q^{*})}{\varphi(q)}
\end{equation}
in the range $(\log{Q})^{6} \leq T \leq (\log{Q})^{A}$, as $Q \rightarrow \infty$.  Here the constant
$c(\theta,r,R)$ is given by
\begin{equation}\label{118}
r^{2}c(\theta,r,R) = C(\theta,r,R) + e^{2R}C^{*}(\theta,r,R)
\end{equation}
with
\begin{equation}\label{119}
C(\theta,r,R) = -\Big(\frac{r^{2}}{2} + \frac{1}{4R^{2}}\Big)\Big(\frac{1}{\theta R} + \frac{\theta R}{3}\Big)
+ \frac{r^{2}}{2} - \frac{r}{2R}\Big(\frac{1}{\theta R} - \frac{\theta R}{3}\Big)
\end{equation}
and $C^{*}(\theta,r,R)$ is obtained from $C(\theta,r,R)$ by changing $r,R$ to $1-r, -R$ respectively; that is
\begin{equation}\label{120}
C^{*}(\theta,r,R) = \Big(\frac{(r-1)^{2}}{2} + \frac{1}{4R^{2}}\Big)\Big(\frac{1}{\theta R} + \frac{\theta R}{3}\Big)
+ \frac{(r-1)^{2}}{2} + \frac{r-1}{2R}\Big(\frac{1}{\theta R} - \frac{\theta R}{3}\Big).
\end{equation}
\end{theorem1}
In particular for $\theta = 1$ we have
\begin{equation}\label{121}
C(1,r,R) = -\Big(\frac{r^{2}}{2} + \frac{1}{4R^{2}}\Big)\Big(\frac{1}{R} + \frac{R}{3}\Big)
+ \frac{r^{2}}{2} - \frac{r}{2R}\Big(\frac{1}{R} - \frac{R}{3}\Big)
\end{equation}
\begin{equation}\label{122}
C^{*}(1,r,R) = \Big(\frac{(r-1)^{2}}{2} + \frac{1}{4R^{2}}\Big)\Big(\frac{1}{R} + \frac{R}{3}\Big)
+ \frac{(r-1)^{2}}{2} + \frac{r-1}{2R}\Big(\frac{1}{R} - \frac{R}{3}\Big).
\end{equation}

For $r =1$ the formula \eqref{118} simplifies a lot
\begin{equation}\label{123}
c(\theta,1,R) = \frac{e^{2R}}{4R^{2}}\Big(\frac{1}{\theta R} + \frac{\theta R}{3}\Big) - \frac{1}{4\theta R^{3}}
- \frac{1}{2\theta R^{2}} - \Big(\frac{1}{2\theta} + \frac{\theta}{12}\Big)\frac{1}{R} + \frac{\theta + 3}{6}
- \frac{\theta R}{6}.
\end{equation}
The original choice of Levinson was $\theta = 1/2$, $r=1$, in which case \eqref{123} becomes
$$c(\tfrac{1}{2},1,R) = \frac{e^{2R}}{4R^{2}}\Big(\frac{2}{R} + \frac{R}{6}\Big) - \frac{1}{2R^{3}}
- \frac{1}{R^{2}} - \frac{25}{24}\frac{1}{R} + \frac{7}{12} - \frac{R}{12}.$$

Recall that $I_{\chi}$ denotes the weighted mean-value of $|G(\sigma + it,\chi)M(\frac{1}{2}+it,\chi)|^{2}$
and $\Phihat(1)$ denotes the mean-value of $\Phi(t)$, so the asymptotic formula \eqref{117} asserts that
$c(\theta,r,R)$ is the mean-value of $I_{\chi}$.

Now it is quick to derive Theorem \ref{theorem1} from Theorem \ref{theorem2}.  By Corollary \ref{corollaryA}
in the last section we get
\begin{equation}\label{124}
\mathcal{N}_{0}'(T,Q) \geq (\kappa' + o(1))\mathcal{N}(T,Q)
\end{equation}
for $(\log{Q})^{6} \leq T \leq (\log{Q})^{A}$, $Q \rightarrow \infty$, where
\begin{equation}\label{125}
\kappa' = 1 - \frac{1}{R}\log{c(\theta,r,R)}.
\end{equation}
For $\theta = 1$, $r=10/9$ and $R=0.83$ this yields $c(\theta,r,R) = 1.44079\ldots$ and
$\kappa' = 0.56001\ldots$.

\begin{remark1}
Our choice of the mollifier \eqref{116} is relatively simple, but definitely not optimal.
The optimization analysis is given in the original paper \cite{C2}, see also \cite{C1}, and for the mollifier of the
form \eqref{19} the optimal choice turns out to be
$$P(x) = \frac{\sinh{ax}}{\sinh{a}} = \frac{e^{a}m^{\alpha}-e^{-a}m^{-\alpha}}{e^{a}-e^{-a}}$$
for $x = 1 - \log{m}/\log{X}$ with $\alpha = a/\log{X}$.  Numerically the best values are $a = 1.3408$,
$r^{-1} = 0.94$ and $R = 0.75$ giving \eqref{124} with
\begin{equation}\label{126}
\kappa' = 0.5865.
\end{equation}
\end{remark1}

\begin{remark1}
Our method also applies to twists of $GL_2$ and $GL_3$ L-functions. These cases are easier  because
the off-diagonal analysis is not necessary. However, the length of the mollifier is effectively shorter in these cases.
The principle is that a $GL_n$ mollifier of length $Q^{\theta-\epsilon}$ corresponds to a $GL_1$ mollifier of length $Q^{(\theta-\epsilon)/n}$.
By Theorem 2.3 of [CIS] we can take a $GL_2$ mollifier of length $Q^{1-\epsilon}$ and a $GL_3$ mollifier of length $Q^{1/2-\epsilon}$.
These correspond to  $GL_1$ situations with $\theta=1/2 $ and $\theta=1/6 $, respectively. Using the formula
$$\kappa'(\theta,r,R)=1-\frac{1}{R} c(\theta, r , R),$$
the $GL_1$ case may be written as
$$ \kappa'(1,1.06, 0.75) =0.5865\dots$$
We also have
$$ \kappa'(1/2,0.96, 1.24) =0.356\dots \qquad \kappa'(1/6,0.91, 2.37) =0.005\dots$$
Thus, using an obvious notation,
\begin{equation}
\mathcal{N}_f(T,Q) = \sum_{q}\frac{\Psi(q/Q)}{\varphi(q)}
\sumstar_{\chi (\bmod{q})}N_f(T,\chi)
\end{equation}
for counting the zeros of the twist $L_f(s,\chi)$ of an automorphic L-function $L_f(s)$, we have
\begin{theorem1}\label{theorem3} If $L_f(s)$ is a $GL_2$ L-function, then
for $Q$ and $T$ with $(\log{Q})^{6} \leq T \leq (\log{Q})^{A}$ we have
\begin{equation}
\mathcal{N}_{f,0}'(T,Q) \geq \frac{7}{20}\mathcal{N}_f(T,Q),
\end{equation}
 and if $L_f(s)$ is a $GL_3$ L-function, then
\begin{equation}
\mathcal{N}_{f,0}'(T,Q) \geq \frac{1}{200}\mathcal{N}_f(T,Q),
\end{equation}
\end{theorem1}
In other words, on average at least $35\%$ of the zeros of twists of a $GL_2$ L-function are simple and on the critical line, and at least one-half of one percent of the zeros of the twists of a given $GL_3$ L-function are simple and on the critical line.
\end{remark1}

Theorem \ref{theorem2} is the main ingredient in the proof of Theorem \ref{theorem1}.  In this paper we derive
Theorem \ref{theorem2} from more general results which we established in a separate paper \cite{CIS}.  These
results may have other applications and some parts of \cite{CIS} are better presented in a broader context.

We should say that the asymptotic formula \eqref{117} emerges from certain diagonal terms alone which are
relatively easy to compute, while the estimation of the off-diagonal terms constitutes the core of the matter.
The diagonal terms in question are the same for every $I_{\chi}$, so the averaging over $\chi \pmod{q}$ and
over $q \asymp Q$ does not play any role in estimating the percentage of the critical zeros.  These averagings are
needed solely to show that the contribution of the off-diagonal terms is negligible which is due to very strong
orthogonality of the characters and the randomness of the sign change of the M\"{o}bius function $\mu(m)$ in the mollifier.
To see the use of this feature we refer the reader to Section 8 of \cite{CIS}.

\vspace{.1in}

ACKNOWLEDGEMENTS. These works were begun at AIM in 1998 and continued over the years at AIM, Rutgers, IAS, Stanford, Bristol, and MSRI. We gratefully acknowledge the support of all of these institutions. This work was also supported in part by grants from the National Science Foundation.

\section{transformation of $I_{\chi}$}\label{section2}

We shall capture the derivative of $L(s,\chi)$ in $G(s,\chi)$ by Cauchy's formula
$$L'(s,\chi) = \frac{1}{2\pi i}\int_{|z|=\eps}L(s+z,\chi)z^{-2}dz.$$
To this end we need to consider a modified integral
\begin{equation}\label{21}
I_{\chi}(a,b) = \int L(a+it,\chi)L(1-b-it,\overline{\chi})|M(\tfrac{1}{2}+it,\chi)|^{2}\Phi(t)dt
\end{equation}
for complex numbers $a,b$ which are near $\sigma$ and $1-\sigma$ respectively.
We have
\begin{equation*}\begin{split}
|G(\sigma+ it,\chi)|^{2} & =  L(\sigma + it, \chi)L(\sigma - it, \chi)\\
& +  \lambda L(\sigma + it, \chi)L'(\sigma - it, \overline{\chi}) + \lambda L'(\sigma + it, \chi)L(\sigma - it, \chi)\\
& +  \lambda^{2} L'(\sigma + it, \chi)L'(\sigma - it, \overline{\chi}).
\end{split}\end{equation*}
Hence the integral \eqref{110} can be expressed in terms of \eqref{21} as follows
\begin{equation}\begin{split}\label{22}
I_{\chi}  =  I_{\chi}(\sigma,1-\sigma) &+ \lambda \oint I_{\chi}(\sigma, 1-\sigma + \beta)\beta^{-2}d\beta
+ \lambda \oint I_{\chi}(\sigma + \alpha, 1-\sigma)\alpha^{-2}d\alpha \\
&+ \lambda^{2}\oint\oint I_{\chi}(\sigma + \alpha, 1-\sigma + \beta)\alpha^{-2}\beta^{-2}d\alpha d\beta.
\end{split}\end{equation}

\section{Splitting $I_{\chi}(a,b)$}\label{section3}

We begin evaluation of $I_{\chi}(a,b)$ by opening the mollifier
\begin{equation}\label{31}
|M(\tfrac{1}{2}+it,\chi)|^{2} = \sum\sum_{h,k \leq X}c(h)c(k)\chi(h)\overline{\chi}(k)\Big(\frac{k}{h}\Big)^{it}
\end{equation}
where we put
\begin{equation}\label{32}
c(h) = \frac{\mu(h)}{\sqrt{h}}P(1 - \frac{\log{h}}{\log{X}}).
\end{equation}
Accordingly \eqref{21} splits into
\begin{equation}\label{33}
I_{\chi}(a,b) = \sum\sum_{h,k \leq X}c(h)c(k)I_{\chi}(a,b;h,k)
\end{equation}
where
\begin{equation}\label{34}
I_{\chi}(a,b;h,k) = \chi(h)\overline{\chi}(k)\int L(a + it,\chi)L(1-b-it,\overline{\chi})\Big(\frac{k}{h}\Big)^{it}\Phi(t)dt.
\end{equation}
Observe that $I_{\chi}(a,b;h,k)$ in $h,k$ depends only on the ratio $h/k$ in its lowest terms
\begin{equation}\label{35}
\frac{h}{k} = \frac{h_{1}}{k_{1}} \qquad \text{with} \,\, (h_{1},k_{1}) = 1
\end{equation}
provided we keep the redundant condition
\begin{equation}\label{36}
(hk,q) = 1.
\end{equation}

\section{Applying the functional equation}\label{section4}

For a primitive character $\chi \pmod{q}$ the $L$-function satisfies the following functional equation
\begin{equation}\label{41}
\Lambda(s,\chi) = \eps_{\chi}\Lambda(1-s,\chibar)
\end{equation}
where
\begin{equation}\label{42}
\Lambda(s,\chi) = \textbf{q}^{s/2}\Gamma(\frac{s+\nu}{2})L(s,\chi)
\end{equation}
with $\textbf{q} = q/\pi$ and $\nu = 0,1$ according to $\chi(-1) = 1,-1$.  Moreover $\eps_{\chi}$ is a complex number
with $|\eps_{\chi}| = 1$ (the sign of the Gauss sum).  Hence the product
\begin{equation}\label{43}
D_{A,B}(v) = \Lambda(A + v,\chi)\Lambda(1-B+v,\chibar)
\end{equation}
satisfies the functional equation
\begin{equation}\label{44}
D_{A,B}(v) = D_{B,A}(-v).
\end{equation}
We shall use this for $A = a+it$, $B=b+it$.  In this case
\begin{eqnarray}\nonumber
D_{A,B}(0) & = & \Lambda(A,\chi)\Lambda(1-B,\chibar)\\
\label{45} & = & \textbf{q}^{\frac{1}{2}(1+a-b)}\gamma_{ab}(t,\nu)L(a+it,\chi)L(1-b-it,\chibar)
\end{eqnarray}
where
\begin{equation}\label{46}
\gamma_{ab}(t,v) = \Gamma(\frac{a+it+v}{2})\Gamma(\frac{1-b-it+v}{2}).
\end{equation}
On the other hand we compute $D_{A,B}(0)$ by contour integration
\begin{equation}\label{47}
D_{A,B}(0) = \frac{1}{2\pi i}\int_{(\sigma)}[D_{A,B}(v) + D_{B,A}(v)]\frac{\omega(v)}{v}dv, \qquad \sigma >1.
\end{equation}
Here, for technical convenience we introduced a polar annihilator
\begin{equation}\label{48}
\omega(v) = \Big(1 - \Big(\frac{2v}{A-B}\Big)^{2}\Big)e^{v^{2}}.
\end{equation}
Note that $A - B = a-b \neq 0$, $\omega(v)$ is entire function of exponential decay in vertical strips, $\omega(v) = \omega(-v)$,
$\omega(0) = 1$, $\omega(\frac{a-b}{2})=0$.  For the proof of \eqref{47} start from the integral
$$\frac{1}{2\pi i}\int_{(\sigma)}D_{A,B}(v)\frac{\omega(v)}{v}dv,$$
move to the line $-\sigma$ passing a simple pole at $v=0$ with residue $D_{A,B}(0)$, then use the functional equation
\eqref{44} to return from the $-\sigma$ to the $\sigma$ line, getting \eqref{47}.

On the line $\Re{v} = \sigma > 1$ we expand \eqref{43} into Dirichlet series
$$D_{A,B}(v) = \textbf{q}^{\frac{1}{2}(1+a-b)}\gamma_{ab}(t, \nu + v)\sum_{m}\sum_{n}
\frac{\chi(m)}{m^{a}}\frac{\chibar(n)}{n^{1-b}}\Big(\frac{n}{m}\Big)^{it}\Big(\frac{\textbf{q}}{mn}\Big)^{v}.$$
Hence
\begin{eqnarray*}
\frac{1}{2\pi i}\int_{(\sigma)}D_{A,B}(v)\frac{\omega(v)}{v}dv & = &
\textbf{q}^{\frac{1}{2}(1+a-b)}\sum_{m}\sum_{n}\frac{\chi(m)}{m^{a}}\frac{\chibar(n)}{n^{1-b}}\Big(\frac{n}{m}\Big)^{it}\\
&& \frac{1}{2\pi i}\int_{(\sigma)}\gamma_{ab}(t,\nu + v)\Big(\frac{\textbf{q}}{mn}\Big)^{v}\frac{\omega(v)}{v}dv.\end{eqnarray*}
Add to this the same expression with $a,b$ interchanged to get $D_{A,B}(0)$ by \eqref{47}.  Inserting the result to \eqref{45}
we get
\begin{align*}
L(a+it,\chi&)L(1-b-it,\chibar) \\
&= \sum_{m}\sum_{n}\frac{\chi(m)}{m^{a}}\frac{\chibar(n)}{n^{1-b}}\Big(\frac{n}{m}\Big)^{it}
\frac{1}{2\pi i}\int_{(\sigma)}\frac{\gamma_{ab}(t,\nu + v)}{\gamma_{ab}(t,\nu + 0)}
\Big(\frac{\textbf{q}}{mn}\Big)^{v}\frac{\omega(v)}{v}dv\\
&+ \textbf{q}^{b-a}\sum_{m}\sum_{n}\frac{\chi(m)}{m^{b}}\frac{\chibar(n)}{n^{1-a}}\Big(\frac{n}{m}\Big)^{it}
\frac{1}{2\pi i}\int_{(\sigma)}\frac{\gamma_{ba}(t,\nu + v)}{\gamma_{ab}(t,\nu + 0)}
\Big(\frac{\textbf{q}}{mn}\Big)^{v}\frac{\omega(v)}{v}dv.\end{align*}
Note that the above two lines are not symmetric in $a,b$.  Inserting these lines to \eqref{34} we obtain
\begin{eqnarray}\label{49}
I_{\chi}(a,b;h,k) &=& \sum_{m}\sum_{n}\frac{\chi(hm)}{m^{a}}\frac{\chibar(kn)}{n^{1-b}}H_{ab}(\frac{kn}{hm},\frac{mn}{\textbf{q}})\\
\nonumber &+&
 \textbf{q}^{b-a}\sum_{m}\sum_{n}\frac{\chi(hm)}{m^{b}}\frac{\chibar(kn)}{n^{1-a}}H^{*}_{ab}(\frac{kn}{hm},\frac{mn}{\textbf{q}})
\end{eqnarray}
where $H_{ab}(x,y)$ is the function defined by
\begin{equation}\label{410}
H_{ab}(x,y) = \frac{1}{2\pi i}\int_{(\sigma)}\Big(\int\Phi(t)x^{it}\frac{\gamma_{ab}(t,\nu + v)}{\gamma_{ab}(t,\nu + 0)}dt\Big)
y^{-v}\frac{\omega(v)}{v}dv.
\end{equation}
The adjoint function $H^{*}_{ab}(x,y)$ is defined similarly by interchanging $a,b$ in the numerator of \eqref{410} but
keeping the same denominator.

For notational simplicity and direct reference we set
\begin{equation}\label{411}
h_{ab}(t,v) = \frac{\gamma_{ab}(t,\nu + v)}{\gamma_{ab}(t,\nu + 0)} =
\frac{\Gamma(\frac{1}{2}(a+it+\nu+v))\Gamma(\frac{1}{2}(1-b-it+\nu+v))}
{\Gamma(\frac{1}{2}(a+it+\nu))\Gamma(\frac{1}{2}(1-b-it+\nu))}
\end{equation}
and
\begin{equation}\label{412}
h^{*}_{ab}(t,v) = \frac{\gamma_{ba}(t,\nu + v)}{\gamma_{ab}(t,\nu + 0)} =
\frac{\Gamma(\frac{1}{2}(b+it+\nu+v))\Gamma(\frac{1}{2}(1-a-it+\nu+v))}
{\Gamma(\frac{1}{2}(a+it+\nu))\Gamma(\frac{1}{2}(1-b-it+\nu))}
\end{equation}
Hence \eqref{410} becomes
\begin{equation}\label{413}
H_{ab}(x,y) =  \frac{1}{2\pi i}\int_{(\sigma)}\Big(\int\Phi(t)x^{it}h_{ab}(t,v)dt\Big)y^{-v}\frac{\omega(v)}{v}dv.
\end{equation}
Similarly $H^{*}_{ab}(x,y)$ is given by \eqref{413} with $h_{ab}(t,v)$ replaced by $h^{*}_{ab}(t,v)$.

Note that for $v=0$ we have $h_{ab}(t) = h_{ab}(t,0) = 1$ and
\begin{equation}\label{414}
h^{*}_{ab}(t) = h^{*}_{ab}(t,0) =
\frac{\Gamma(\frac{1}{2}(b+it+\nu))\Gamma(\frac{1}{2}(1-a-it+\nu))}
{\Gamma(\frac{1}{2}(a+it+\nu))\Gamma(\frac{1}{2}(1-b-it+\nu))}.
\end{equation}
By Stirling's formula this yields
\begin{equation}\label{415}
h^{*}_{ab}(t) = |t|^{b-a} + O((|t|+1)^{\Re{(b-a-1)}}).
\end{equation}

\section{Estimates for $H_{ab}(x,y)$}\label{section5}

We shall need the following estimates for partial derivatives of $H_{ab}(x,y)$ and $H^{*}_{ab}(x,y)$.
\begin{lemma}\label{lemma51}
Suppose $\Phi(t)$ satisfies \eqref{112} with $T \geq 3$.  We have
\begin{equation}\label{51}
x^{i}y^{j}\frac{\partial^{i+j}}{\partial x^{i} \partial y^{j}}H_{ab}(x,y) \ll (1 + |\log{x}|)^{-A}(1 + \frac{y}{T})^{-B}T^{1+i}
\end{equation}
for any $i,j,A,B \geq 0$ and the implied constant depending on $i,j,A,B$.  Moreover the same estimates hold for $H^{*}_{ab}(x,y)$.
\end{lemma}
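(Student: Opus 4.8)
The plan is to deduce \eqref{51} from the definition \eqref{413} by differentiating under both integral signs and then producing the three decaying factors separately: $(1+|\log x|)^{-A}$ from oscillation of $x^{it}$ in the inner $t$-integral, $(1+y/T)^{-B}$ from a shift of the $v$-contour, and $T^{1+i}$ from the scale $T$ appearing in \eqref{112}; Stirling's formula applied to the Gamma quotient $h_{ab}(t,v)$ in \eqref{411} is the analytic workhorse throughout. Differentiating \eqref{413} under the integral signs is legitimate because \eqref{112} makes $\Phi$ and all its derivatives decay rapidly, the factor $e^{v^{2}}$ in $\omega$ decays like $e^{-(\Im v)^{2}}$, and $h_{ab}$ grows only polynomially (this is checked below). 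Since $x^{i}\partial_{x}^{i}x^{it}=P_{i}(it)\,x^{it}$ and $y^{j}\partial_{y}^{j}y^{-v}=Q_{j}(v)\,y^{-v}$ for the falling-factorial polynomials $P_{i},Q_{j}$ of degrees $i$ and $j$, the left side of \eqref{51} equals
\begin{equation*}
\frac{1}{2\pi i}\int_{(\sigma)}\mathcal I(v)\,Q_{j}(v)\,y^{-v}\,\frac{\omega(v)}{v}\,dv,\qquad \mathcal I(v):=\int\Phi(t)\,x^{it}P_{i}(it)\,h_{ab}(t,v)\,dt,
\end{equation*}
and it remains to estimate this for a suitably chosen contour.

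For the factor $(1+|\log x|)^{-A}$ I would, when $|\log x|>1$, integrate $\mathcal I(v)$ by parts $\lceil A\rceil$ times in $t$ using $\frac{d}{dt}x^{it}=(i\log x)x^{it}$, gaining a factor $(\log x)^{-1}$ at each step; the boundary terms vanish because the rapid decay of $\Phi$ in \eqref{112} beats the polynomial growth of $P_{i}(it)h_{ab}(t,v)$, and when $|\log x|\le1$ one estimates $\mathcal I(v)$ directly. This needs bounds for $\partial_{t}^{m}h_{ab}$, which I would obtain from the logarithmic derivative $\partial_{t}\log h_{ab}(t,v)=\tfrac{i}{2}\bigl[\psi(\tfrac{a+it+\nu+v}{2})-\psi(\tfrac{a+it+\nu}{2})-\psi(\tfrac{1-b-it+\nu+v}{2})+\psi(\tfrac{1-b-it+\nu}{2})\bigr]$ together with $\psi'(z)\ll1/|z|$, which gives $\partial_{t}^{m}h_{ab}(t,v)\ll_{m}|h_{ab}(t,v)|(1+|v|)^{m}$, so that repeated differentiation in $t$ costs only powers of $|v|$, which are tame on the contour. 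I should also record that, since $a$ and $b$ lie near $\tfrac12$, the arguments of all Gamma functions in \eqref{411} stay uniformly away from the non-positive integers, so $h_{ab}(\,\cdot\,,v)$ is holomorphic in a fixed horizontal strip and the integrations by parts are valid.

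For the remaining two factors I would invoke Stirling: for $\Re v$ in a bounded range, $|h_{ab}(t,v)|\ll(1+|t|)^{\Re v}$ when $|\Im v|\le|t|$, and the same bound times $e^{-c(|\Im v|-|t|)}$ otherwise; combined with $|e^{v^{2}}|=e^{(\Re v)^{2}-(\Im v)^{2}}$ the $v$-integral is absolutely convergent and effectively supported on $|\Im v|$ bounded, while \eqref{112} confines the $t$-integral to $|t|\ll T$, where $|P_{i}(it)|\ll T^{i}$. Hence $\mathcal I(v)\ll(1+|\log x|)^{-A}\,T^{1+i+\Re v}$ on the contour. Now I shift the $v$-line: the integrand of \eqref{413} is holomorphic apart from the simple pole at $v=0$ from $1/v$ (the Gamma poles of $h_{ab}$ lie in $\Re v\le-\tfrac12+o(1)$, and $\omega$ is entire). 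If $y>T$ I move to $\Re v=B$, picking up $y^{-B}$ and turning the above into $T^{1+i}(T/y)^{B}\asymp T^{1+i}(1+y/T)^{-B}$. If $y<1$ I move to $\Re v=-\sigma'$ with $0<\sigma'<\tfrac12$, crossing $v=0$; its residue is $Q_{j}(0)\,\omega(0)\int\Phi(t)x^{it}P_{i}(it)h_{ab}(t,0)\,dt$, which vanishes for $j\ge1$ because $Q_{j}(0)=0$ and for $j=0$ equals $\int\Phi(t)x^{it}P_{i}(it)\,dt\ll T^{1+i}(1+|\log x|)^{-A}$ since $h_{ab}(t,0)=1$, while on the shifted line $|y^{-v}|=y^{\sigma'}\le1$. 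For $1\le y\le T$ I keep $\Re v$ equal to a small positive constant, where $(1+y/T)^{-B}\asymp1$. The only nuisance is the size of $\omega$ on a shifted line, namely $\ll(1+|v/(a-b)|^{2})\,e^{(\Re v)^{2}-(\Im v)^{2}}$, which contributes at most a bounded power of $\log\textbf{q}$ and is absorbed (and is simply $O(1)$ if $a-b$ is held fixed).

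The function $H^{*}_{ab}(x,y)$ is handled verbatim with $h^{*}_{ab}$ in place of $h_{ab}$: the only new feature is the extra factor in $h^{*}_{ab}(t,0)$, bounded by $|t|^{\Re(b-a)}+(1+|t|)^{\Re(b-a-1)}$ via \eqref{415}, but $\Re(b-a)=O(1/\log\textbf{q})$, so $|t|^{\Re(b-a)}\ll T^{o(1)}$ on $|t|\ll T$ and nothing changes; there are again no poles in $\Re v>0$, so all contour shifts persist. I expect the main obstacle to be precisely this uniform quantitative control of the Gamma quotients $h_{ab}$ and $h^{*}_{ab}$ — establishing holomorphy off the left half-plane, the exact exponent $\Re v$ of polynomial growth in $t$, the exponential decay in $\Im v$, and the $t$-derivative bounds — together with arranging the contour shifts so that no unwanted pole is crossed and the polar annihilator $\omega$ stays controlled.
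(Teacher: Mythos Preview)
Your approach is essentially the paper's: differentiate under both integrals to produce the falling-factorial polynomials, integrate by parts in $t$ against the oscillation $x^{it}$ to manufacture $(1+|\log x|)^{-A}$, bound $h_{ab}$ and its $t$-derivatives via Stirling and the logarithmic derivative, and then place the $v$-contour according to the size of $y$ relative to $T$. The paper records the slightly sharper derivative bound $\partial_t^{l}h_{ab}(t,v)\ll(1+|t|)^{\sigma-l}(1+|v|)^{\sigma+l}$ (your $\ll|h_{ab}|(1+|v|)^{l}$ suffices), and it handles all of $y\le T$ uniformly by taking $\sigma=0$ and, in the delicate $j=0$ case, moving to $\sigma=-1/4$ and picking up the residue at $v=0$.

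One small correction to your case split: in the middle range $1\le y\le T$ you keep $\Re v$ equal to a fixed small positive constant $\sigma$, which yields $y^{-\sigma}T^{\,\sigma+1+i}\le T^{\,1+i+\sigma}$ and so overshoots the target $T^{1+i}$ by a genuine power $T^{\sigma}$. The fix is immediate and is exactly what you already do for $y<1$ (and what the paper does for all $y\le T$): shift to $\Re v=-\sigma'$ with $0<\sigma'<\tfrac12$, pick up the residue at $v=0$ (which is $\ll T^{1+i}(1+|\log x|)^{-A}$ for $j=0$ and vanishes for $j\ge1$), and on the shifted line use $|y^{-v}|=y^{\sigma'}\le T^{\sigma'}$ together with $T^{-\sigma'}$ from the $T^{1+i+\Re v}$ factor. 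With that adjustment your argument matches the paper's.
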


\begin{proof}
We give details for $H_{ab}(x,y)$, the case of $H^{*}_{ab}(x,y)$ is similar.  The left-hand side of \eqref{51} is equal to
\begin{equation}\label{52}
\frac{1}{2\pi i}\int_{(\sigma)}\Big(\int\Phi(t)x^{it}h_{ab}(t,v)p(t)dt\Big)y^{-v}r(v)\frac{\omega(v)}{v}dv
\end{equation}
where $p(t), r(v)$ are polynomials of degree $i$ and $j$, respectively.  Specifically $p(t) = P_{i}(it)$ and
$r(v) = P_{j}(-v)$, where
$$P_{n}(x) = \prod_{0 \leq d \leq n}(x-d).$$
Then we integrate by parts in the $t$-variable $k$ times, getting
\begin{equation}\label{53}
\int dt = \Big(\frac{i}{\log{x}}\Big)^{k}\int\big(\Phi(t)h_{ab}(t,v)p(t)\big)^{(k)}x^{it}dt.
\end{equation}
For any $l \geq 0$ we have
\begin{equation}\label{54}
\big(h_{ab}(t,v)\big)^{(l)} \ll (|t|+1)^{\sigma - l}(|v|+1)^{\sigma + l}
\end{equation}
where $\sigma = \Re{v} \geq -1/4$.  For $l=0$ this follows by Stirling's formula
$$\Gamma(s) = \Big(\frac{2\pi}{s}\Big)^{1/2}\Big(\frac{s}{e}\Big)^{s}\Big(1 + O\Big(\frac{1}{|s|}\Big)\Big), \qquad
|\arg{s}| \leq \pi - \eps.$$
We are going to verify \eqref{54} for $l=1$, the case of higher derivatives is similar.  To this end it is enough to estimate
the logarithmic derivative
\begin{equation}\begin{split}\label{55}
\Big(\log{h_{ab}(t,v)}\Big)' &=  \frac{i}{2}\psi(\frac{a+it+\nu+v}{2}) - \frac{i}{2}\psi(\frac{1-b-it+\nu+v}{2})\\
&-\frac{i}{2}\psi(\frac{a+it+\nu}{2}) + \frac{i}{2}\psi(\frac{1-b-it+\nu}{2})
\end{split}\end{equation}
where
$$\psi(s) = \frac{\Gamma'}{\Gamma}(s) = \log{s} + O\Big(\frac{1}{|s|}\Big) = \log{|s|} +i\arg{s} + O\Big(\frac{1}{|s|}\Big).$$
Hence it is easy to see that \eqref{55} is bounded by $(|v|+1)/(|t|+1)$.  Multiplying \eqref{54} for $l=0$ by this bound
we get \eqref{54} for $l=1$.

By \eqref{112}, \eqref{54} and \eqref{53} we derive
$$\int dt \ll \frac{(|v|+1)^{\sigma + k}}{|\log{x}|^{k}}\int_{0}^{\infty}\Big(1 + \frac{t}{T}\Big)^{-C}(t+1)^{\sigma - k+i}dt
\ll \frac{(|v|+1)^{\sigma+k}}{|\log{x}|^{k}}T^{\sigma + 1+i}.$$
Inserting this bound to \eqref{52} and estimating trivially in $v$ (recall that $\omega(v)$ has more than exponential decay as
$|v| \rightarrow \infty$ in vertical strips) we find that the left-hand side of \eqref{51} is bounded by
$$y^{-\sigma}T^{\sigma+1+i}|\log{x}|^{-k}.$$
This gives the right-hand side of \eqref{51} by the following choices of $k$ and $\sigma$;
$$\begin{array}{lrl}
k = 0  \,\,\text{if} & |\log{x}| \leq 1,& \quad k=A \,\,\text{otherwise}\\
\sigma = 0 \,\,\text{if} & y \leq T,& \quad \sigma = B \,\,\text{otherwise}.
\end{array}$$
\end{proof}

\begin{remark1}
Formally speaking our choice $\sigma = 0$ if $y \leq T$ and $j=0$ is not allowed because of the simple pole at $v=0$.
To be precise in this case, move to the line $\sigma = -1/4$ getting a better bound than claimed, except for the contribution
of the residue at $v=0$ which gives the bound as claimed.
\end{remark1}

\begin{lemma1}\label{lemma51star}
The bound \eqref{51} holds for $H^{*}_{ab}(x,y)$, but with the extra factor $T^{\Re{(b-a)}}$.
\end{lemma1}

\begin{proof}
Use the same arguments as for $H_{ab}(x,y)$ and along the lines apply \eqref{415}.
\end{proof}

\begin{remark1}
The excess factor $T^{i}$ in \eqref{51} is not going to cause a problem because in applications $T$ is relatively
small, $T \ll (\log{Q})^{C}$.  Moreover if $|\log{x}| \gg Q$ then a loss of any power of $T$ is compensated by the gain in
powers of $|\log{x}|$.
\end{remark1}

If $y > T$ the above arguments yield
\begin{equation}\label{56}
H_{ab}(x,y) \ll (1 + T|\log{x}|)^{-A}\Big(\frac{T}{y}\Big)^{B}T
\end{equation}
for any $A,B \geq 0$.  The same bound holds for $H^{*}_{ab}(x,y)$.

\begin{remark1}
For $i=j=0$ the bound \eqref{51} becomes
\begin{equation}\label{57}
H_{ab}(x,y) \ll (1 + |\log{x}|)^{-A}\Big(1 + \frac{y}{T}\Big)^{-B}T.
\end{equation}
The same bound holds for $H^{*}_{ab}(x,y)$.  These bounds show that the series \eqref{49} runs effectively over $m,n$ in the range
\begin{equation}\label{58}
mn \leq (QT)^{1+\eps}
\end{equation}
The contribution of the tail of the series is negligible.  Moreover, after applications of the Asymptotic Large Sieve
(which is developed in \cite{CIS}), one only needs the range $|\log{x}| \leq \eps\log{QT}$, i.e.
\begin{equation}\label{59}
(QT)^{-\eps} \leq \frac{hn}{km} \leq (QT)^{\eps}.
\end{equation}
\end{remark1}

\section{Selecting the diagonal}\label{section6}

The main objective of \cite{CIS} is to evaluate character sums of general type and some special type like $T_{\chi}(a,b;h,k)$ given by
\eqref{49}.  Our test function $H_{ab}(x,y)$ in \eqref{49} satisfies the conditions described in Section 9 of \cite{CIS}.
Moreover the coefficients $c(h)$ in our mollifier (see \eqref{32}) also satisfies the conditions (2.24)-(2.26) of
\cite{CIS} (apart of the normalization).  Therefore according to Theorem 2.5 of \cite{CIS} the main contribution to
\eqref{49} comes from the diagonal terms $hm=kn$ giving
\begin{equation}\label{61}\begin{split}
I^{=}(a,b;h,k) = &\sum\sum_{\substack{hm=kn\\(mn,q)=1}}m^{-a}n^{b-1}H_{ab}(1,\frac{mn}{\textbf{q}})\\
&+\textbf{q}^{b-a}\sum\sum_{\substack{hm=kn\\(mn,q)=1}}m^{-b}n^{a-1}H^{*}_{ab}(1,\frac{mn}{\textbf{q}}).
\end{split}\end{equation}
The off-diagonal terms $hm \neq kn$ in \eqref{49} are not really small for a given character $\chi \pmod{q}$, but they cancel
out considerably in average over $\chi \pmod{q}$, $q \asymp Q$ and $h,k$ with
\begin{equation}\label{62}
h,k \leq X \leq Q^{1-\eps}.
\end{equation}
Denote this average by
\begin{eqnarray}\label{63}
\mathcal{L}_{ab}(Q,T) &= &\sum_{q}\frac{\Psi(q/Q)}{\varphi(q)}\sumstar_{\chi (\bmod{q})}I_{\chi}(a,b)\\\nonumber
& = & \sum_{q}\frac{\Psi(q/Q)}{\varphi(q)}\sumstar_{\chi (\bmod{q})}\sum\sum_{h,k \leq X}c(h)c(k)I_{\chi}(a,b;h,k).
\end{eqnarray}
The corresponding diagonal term is
\begin{equation}\label{64}
\mathcal{L}^{=}_{ab}(Q,T) = \sum_{q}\Psi(q/Q)\frac{\varphi^{*}(q)}{\varphi(q)}
\sum\sum_{\substack{h,k \leq X\\(hk,q)=1}}c(h)c(k)I^{=}(a,b;h,k).
\end{equation}
By Theorem 2.5 of \cite{CIS} we get
\begin{equation}\label{65}
\mathcal{L}_{ab}(Q,T) = \mathcal{L}^{=}_{ab}(Q,T) + (QT(\log{Q})^{-C})
\end{equation}
where $C$ is any positive constant.

It remains to evaluate $\mathcal{L}^{=}_{ab}(Q,T)$.  In this task we no longer need any help from averaging over the
conductor $q \asymp Q$.  We shall handle separately every sum
\begin{equation}\label{66}
E_{q}(X) = \sum\sum_{\substack{h,k \leq X\\(hk,q)=1}}c(h)c(k)I^{=}(a,b;h,k).
\end{equation}
Note that by trivial estimation using \eqref{57} we get
\begin{equation}\label{67}
I^{=}(a,b;h,k) \ll \frac{(h,k)}{\sqrt{hk}}T\log{qT}.
\end{equation}
Hence
\begin{equation}\label{68}
E_{q}(X) \ll T(\log{qT})(\log{X})^{3},
\end{equation}
while our goal is to show that
\begin{equation}\label{69}
E_{q}(X) \sim c(\theta,r,R)\Phihat(1)
\end{equation}
which would finish the proof of \eqref{117}.  Therefore we need to save slightly more than $(\log{Q})^{6}$ by comparison
of \eqref{68} and \eqref{69}.

\section{Computing $I^{=}(a,b;h,k)$}\label{section7}

Recall that (see \eqref{413} for $x=1$)
\begin{equation}\label{71}
H_{ab}(1,y) = \frac{1}{2\pi i}\int_{(\sigma)}\Big(\Phi(t)h_{ab}(t,v)dt\Big)y^{-v}\frac{\omega(v)}{v}dv
\end{equation}
and the corresponding integral formula holds for $H^{*}_{ab}(1,y)$.  Inserting these integral representations to \eqref{61}
we get
\begin{equation}\label{72}\begin{split}
I^{=}(a,b;h,k) &= \int\Phi(t)\Big(\frac{1}{2\pi i}\int_{(\sigma)}h_{ab}(t,v)Z_{ab}(v)\textbf{q}^{v}\frac{\omega(v)}{v}dv\Big)dt\\
& +\textbf{q}^{b-a}\int\Phi(t)\Big(\frac{1}{2\pi i}\int_{(\sigma)}h^{*}_{ab}(t,v)Z_{ba}(v)\textbf{q}^{v}\frac{\omega(v)}{v}dv\Big)dt
\end{split}\end{equation}
where
\begin{eqnarray*}
Z_{ab}(v) &=& \sum\sum_{\substack{hm=kn\\(mn,q)=1}}m^{-a}n^{b-1}(mn)^{-v}\\
& = & k_{1}^{-a}h_{1}^{b-1}(h_{1}k_{1})^{-v}\zeta_{q}(2v+1+a-b)
\end{eqnarray*}
where $\zeta_{q}(s)$ stands for the zeta function with the local factors at primes dividing $q$ being omitted.  Recall also
the notation \eqref{35}, that is $h_{1} = h/(h,k)$, $k_{1}=k/(h,k)$, and we keep the condition $(hk,q)=1$.  For notational
convenience we put
\begin{equation}\label{73}
l = \sqrt{h_{1}k_{1}}.
\end{equation}

Next we compute the contour integrals by moving to the line $\sigma = -1/4$ passing simple poles at $v=0$ with residues
\begin{equation}\label{74}
h_{ab}(t,0)k_{1}^{-a}h_{1}^{b-1}\zeta_{q}(1+a-b)
\end{equation}
\begin{equation}\label{75}
h^{*}_{ab}(t,0)k_{1}^{-b}h_{1}^{a-1}\zeta_{q}(1+b-a).
\end{equation}
Note that the pole of $\zeta_{q}(2v+1+a-b)$ at $v=\frac{1}{2}(b-a)$ is annihilated by the zero of $\omega(v)$.

The integrals on the line $\sigma = -1/4$ are bounded by (use \eqref{54}) $O((h_{1}k_{1}/q)^{1/4})$ which is sufficient if
\begin{equation}\label{76}
h_{1}k_{1} \leq q^{1/4}.
\end{equation}
If \eqref{76} does not hold we stop at the line $\Re{v} = 1/\log{q}$.  The resulting trivial estimation is not satisfactory,
but only by a factor $(\log{q})^{c}$, where $c$ is an absolute constant.  However in the range $h_{1}k_{1} > q^{1/4}$
we can gain a factor $(\log{q})^{-C}$ with any large constant $C$ due to the cancellation in the sum of
$\mu(h_{1}k_{1})/h_{1}k_{1}$ which appears in the mollifier.  Having said that we are left with the polar terms
\begin{equation}\label{77}\begin{split}
I^{=}(a,b;h,k) &= k_{1}^{-a}h_{1}^{b-1}\zeta_{q}(1+a-b)\int\Phi(t)h_{ab}(t,0)dt\\
&+\textbf{q}^{b-a}k_{1}^{-b}h_{1}^{a-1}\zeta_{q}(1+b-a)\int\Phi(t)h^{*}_{ab}(t,0)dt\\
&+ \Delta_{q}(a,b;h,k)
\end{split}\end{equation}
where the remainder term $\Delta_{q}(a,b;h,k)$ is small after summation in $h$ and $k$;
\begin{equation}\label{78}
\sum\sum_{\substack{h,k\leq X\\(hk,q)=1}}c(h)c(k)\Delta_{q}(a,b;h,k) \ll (\log{q})^{-C}.
\end{equation}
Note that we no longer need the restriction \eqref{76} for the main terms in \eqref{77} because it can be relaxed for the
same reason which allowed us to introduce it.

At the end of Section \ref{section4} we have noticed that $h_{ab}(t,0)$ and $h^{*}_{ab}(t,0)$ satisfies \eqref{415}.  Hence
the first integral in \eqref{77} is equal to
\begin{equation}\label{79}
\Phihat(1) = \int\Phi(t)dt
\end{equation}
and the second integral is approximately equal to
\begin{equation}\label{710}
\Phihat(1+b-a) = \int\Phi(t)|t|^{b-a}dt
\end{equation}
up to an error term $O(\log{T})$.  This error term is smaller than the main term by factor $T^{-1}\log{T}$ which makes it
negligible if $T \geq (\log{Q})^{6}$.

Recall that $b-a \asymp (\log{Q})^{-1}$ while $|t| \leq T \leq (\log{Q})^{A}$, so $\Phihat(1+b-a)$ does also approximate
to $\Phihat(1)$, but not good enough to ignore the difference
\begin{equation}\label{711}
\phi(b-a) = \int\Phi(t)\big(|t|^{b-a}-1\big)dt,
\end{equation}
at least not yet at current state of our considerations.  Nevertheless we re-write \eqref{77} in the following form
\begin{equation}\label{712}\begin{split}
I^{=}(a,b;h,k) & = \Phihat(1)l^{-1}V_{q}(a,b;h,k)\\
& + \phi(b-a)q^{b-a}h_{1}^{-b}k_{1}^{a-1}\zeta_{q}(1+b-a)\\
& + \Delta_{q}(a,b;h,k) + O(l^{-1}(\log{Q})(\log{T})).
\end{split}\end{equation}
where
\begin{equation}\label{713}
V_{q}(a,b;h,k) = h_{1}^{b-1/2}k_{1}^{1/2-a}\zeta_{q}(1+a-b)+ \textbf{q}^{b-a}h_{1}^{a-1/2}k_{1}^{1/2-b}\zeta_{q}(1+b-a).
\end{equation}

The leading term as well as the second one in \eqref{712} can be handled in very similar ways, so we only go for the leading term
$V_{q}(a,b;h,k)$.  The second term does not contribute to the final main term, it yields less by factor $\log{Q}/\log T,$
due to $\phi(b-a) \ll |b-a| \log T$.

We are going to allow another technical shortcut concerning the co-primality restriction $(hk,q)=1$ and a similar one in
$\zeta_{q}(s)$.  These restrictions can be relaxed without affecting the final asymptotic formula \eqref{69}.
The point is that the action of the mollifier of the zeta function reduces substantially the weights attached to numbers
having small prime factors.  For this reason we are going to suppress the condition $(hk,q)=1$ in \eqref{66} and delete the
subscript $q$ in \eqref{713}.  A precise justification is left as an exercise.

\section{Computing derivatives of $V(a,b;h,k)$}\label{secion8}

We need to evaluate
\begin{equation}\label{81}
V(a,b;h,k) = h_{1}^{b-1/2}k_{1}^{1/2-a}\zeta(1+a-b)+ \textbf{q}^{b-a}h_{1}^{a-1/2}k_{1}^{1/2-b}\zeta_{q}(1+b-a)
\end{equation}
for $a = \sigma + \alpha$ and $b = 1-\sigma - \beta$.  In this case \eqref{81} becomes
\begin{equation}\label{82}
F(\alpha,\beta) = l^{1-2\sigma}h_{1}^{-\beta}k_{1}^{-\alpha}\zeta(2\sigma + \alpha+\beta)+\Big(\frac{l}{\textbf{q}}\Big)^{2\sigma -1}
\frac{h_{1}^{\alpha}k_{1}^{\beta}}{\textbf{q}^{\alpha+\beta}}\zeta(2 - 2\sigma -\alpha-\beta).
\end{equation}
Two operations need to be performed; summation over $h,k$ according to \eqref{78} and computing the derivatives in $\alpha,\beta$
according to \eqref{22}.  We have chosen to do the latter first because it yields an exact simple formula (well, only for a
convenient choice of the parameter $\lambda$).

According to \eqref{22} we need to compute the following linear combination
\begin{eqnarray}\label{83}
V(h,k)& =& F^{(00)}(0,0) + \lambda F^{(10)}(0,0) + \lambda F^{(01)}(0,0) + \lambda^{2} F^{(11)}(0,0)\\\nonumber
& = & \lambda^{2}\big(e^{\frac{\alpha+\beta}{\lambda}}F(\alpha,\beta)\big)^{(11)}, \qquad \text{at}\,\, \alpha = \beta =0.
\end{eqnarray}
We choose $\lambda = (\log{\textbf{q}^{r}})^{-1}$ (see \eqref{16} and \eqref{17}) getting
\begin{align}\label{84}
(\log{\textbf{q}^{r}})^{2}V(h,k) &= \big(\textbf{q}^{(\alpha + \beta)r}F(\alpha,\beta)\big)^{(11)}\\\nonumber
& =  \Big(l^{1-2\sigma}\textbf{q}^{(\alpha+\beta)r}h_{1}^{-\beta}k_{1}^{-\alpha}\zeta(2\sigma + \alpha+\beta)\\\nonumber &+
\Big(\frac{l}{\textbf{q}}\Big)^{2\sigma -1}\textbf{q}^{(\alpha+\beta)(r-1)}h_{1}^{\alpha}k_{1}^{\beta}
\zeta(2 - 2\sigma -\alpha-\beta)\Big)^{(11)}\\
\nonumber &=  l^{1-2\sigma}\Big[(\log{\frac{\textbf{q}^{r}}{h_{1}}})(\log{\frac{\textbf{q}^{r}}{k_{1}}})\zeta(2\sigma)
+\big(\log{\frac{\textbf{q}^{r}}{h_{1}}}+\log{\frac{\textbf{q}^{r}}{k_{1}}}\big)\zeta'(2\sigma) + \zeta''(2\sigma) \Big]\\
\nonumber &+ \textbf{q}^{1-2\sigma}\{\text{above line with $\sigma, r$ replaced by $1-\sigma, 1-r$ respectively}\}.
\end{align}
Recall that $h_{1} = h/(h,k)$, $k_{1}= k /(h,k)$ and $l = \sqrt{h_{1}k_{1}} = \sqrt{hk}/(h,k)$.

\section{Summing over the mollifier}\label{section9}

Next we need to evaluate the sum
\begin{equation}\label{91}
V = \sum_{h}\sum_{k}c(h)c(k)l^{-1}V(h,k),
\end{equation}
see \eqref{66}, \eqref{712}, \eqref{713}.  According to \eqref{84} this splits into
\begin{align}\label{92}
&(\log{\textbf{q}^{r}})^{2}V\\\nonumber
& = [V_{0}(\log{\textbf{q}^{r}})^{2} - 2V_{1}\log{\textbf{q}^{r}} + V_{2}]\zeta(2\sigma) +
2(V_{0}\log{\textbf{q}^{r}}-V_{1})\zeta'(2\sigma)
+V_{0}\zeta''(2\sigma)\\\nonumber
&+\textbf{q}^{1-2\sigma}\{\text{above line with $\sigma, r$ replaced by $1-\sigma, 1-r$ respectively}\}.
\end{align}
Here $V_{0},V_{1},V_{2}$ are the sums of type \eqref{91} with
$$V_{0}(h,k) = l^{1-2\sigma}, \quad V_{1}(h,k) = l^{1-2\sigma}\log{l},
\quad V_{2}(h,k) = l^{1-2\sigma}(\log{h_{1}})(\log{k_{1}}).$$
Have in mind that $V_{0},V_{1},V_{2}$ depend on $\sigma$, so they change in the last line of \eqref{92} by replacing $\sigma$
to $1 - \sigma$ (as do the values of derivatives of the zeta function).

To evaluate the corresponding sums $V_{0},V_{1},V_{2}$ we appeal to Lemma 1 of \cite{C1}.  We only need a special case of this
lemma in which the polynomials $P_{1}(x)$, $P_{2}(x)$ are both equal to $P(x) = x$, (see \eqref{116}).  In this case
Lemma 1 of \cite{C1} yields
\begin{eqnarray}\label{93}
\sum\sum_{h,k \leq X}c(h)c(k)l^{-1}h_{1}^{-\alpha}k_{1}^{-\beta} & \sim &
\frac{1}{\log{X}}\int_{0}^{1}(1 + \alpha x\log{X})(1 + \beta x\log{X})dx\\\nonumber
& = & \frac{1}{\log{X}} + \frac{\alpha + \beta}{2} + \frac{\alpha\beta}{3}\log{X}.
\end{eqnarray}
This formula holds uniformly in complex numbers $\alpha,\beta \ll (\log{X})^{-1}$.  Moreover \eqref{93} admits differentiations
in $\alpha,\beta$.
Choosing $\alpha = \beta = 1/2 - \sigma$ we get
\begin{equation}\label{94}
V_{0} \sim (\log{X})^{-1} - \Big(\frac{1}{2}-\sigma\Big) + \frac{1}{3}\Big(\frac{1}{2}-\sigma\Big)^{2}\log{X}.
\end{equation}
Differentiating $V_{0}$ with respect to $\sigma$ and dividing by $-2$ we get
\begin{equation}\label{95}
V_{1} \sim -\frac{1}{2} + \frac{1}{3}\Big(\frac{1}{2}-\sigma\Big)\log{X}.
\end{equation}
Differentiating \eqref{93} in $\alpha,\beta$ and choosing $\alpha = \beta = 1/2-\sigma$ we get
\begin{equation}\label{96}
V_{2} \sim \frac{1}{3}\log{X}.
\end{equation}

Now we are ready to compute the sum \eqref{91} from the partition \eqref{92} using the asymptotic values given above.
We also approximate $\zeta(s)$ by its polar term $(s-1)^{-1}$ getting the following asymptotic values
$$\zeta(2\sigma) \sim (2\sigma-1)^{-1}, \quad \zeta'(2\sigma) \sim -(2\sigma-1)^{-2}, \quad \zeta''(2\sigma)
\sim 2(2\sigma-1)^{-3}.$$
We choose
\begin{equation}\label{97}
X = \textbf{q}^{\theta} \qquad \text{with}\,\, \theta >0
\end{equation}
and
\begin{equation}\label{98}
\sigma = \frac{1}{2} - \frac{R}{\log{\textbf{q}}} \qquad \text{with}\,\, R >0.
\end{equation}
Then \eqref{94},\eqref{95},\eqref{96} become
\begin{eqnarray*}
V_{0} & \sim & \Big(\frac{1}{\theta R}-1+\frac{\theta R}{3}\Big)\frac{R}{\log{\textbf{q}}},\\
V_{1} & \sim & -\Big(\frac{1}{2}-\frac{\theta R}{3}\Big),\\
V_{2} & \sim & \frac{\theta}{3}\log{\textbf{q}}.
\end{eqnarray*}
Moreover we get
$$\zeta(2\sigma) \sim -\frac{\log{\textbf{q}}}{2R}, \quad \zeta'(2\sigma) \sim -\Big(\frac{\log{\textbf{q}}}{2R}\Big)^{-2},
\quad \zeta''(2\sigma) \sim 2\Big(\frac{\log{\textbf{q}}}{2R}\Big)^{-3}.$$
Note that the corresponding asymptotic values when $\sigma$ is changed to $1-\sigma$ are obtained by changing $R$ to $-R$.

Introducing the above asymptotic values to \eqref{92} we find that $V \sim c(\theta,r,R)$, where $c(\theta,r,R)$ is computed
as follows
$$r^{2}c(\theta,r,R) = C(\theta,r,R) + e^{2R}C(\theta,1-r,-R)$$
with
\begin{eqnarray*}
C(\theta,r,R) & = &
-\frac{1}{2R}\Big[r^{2}\Big(\frac{1}{\theta R} -1+\frac{\theta R}{3}\Big)R + 2r\Big(\frac{1}{2}-\frac{\theta R}{3}\Big)
+ \frac{\theta}{3}\Big]\\
&&-\frac{1}{2R^{2}}\Big[r\Big(\frac{1}{\theta R} -1+\frac{\theta R}{3}\Big)R + \Big(\frac{1}{2}-\frac{\theta R}{3}\Big)
\Big]\\
&&-\frac{1}{4R^{3}}\Big(\frac{1}{\theta R} -1+\frac{\theta R}{3}\Big)R\\
&=&-\frac{1}{4R^{2}}\Big(\frac{1}{\theta R} -1+\frac{\theta R}{3}\Big)(2r^{2}R^{2}+2rR+1)\\
&&-\frac{1}{2R^{2}}\Big(\frac{1}{2}-\frac{\theta R}{3}\Big)(2rR+1)-\frac{\theta}{6R}\\
&=&-\frac{r^{2}}{2}\Big(\frac{1}{\theta R} -1+\frac{\theta R}{3}\Big) - \frac{2rR+1}{4R^{2}}\Big(\frac{1}{\theta R} -
\frac{\theta R}{3}\Big) - \frac{\theta}{6R}\\
&=&-\frac{r^{2}}{2}\Big(\frac{1}{\theta R}+\frac{\theta R}{3}\Big) +\frac{r^{2}}{2} - \frac{r}{2R}\Big(\frac{1}{\theta R} -
\frac{\theta R}{3}\Big) - \frac{1}{4R^{2}}\Big(\frac{1}{\theta R} +\frac{\theta R}{3}\Big)
\end{eqnarray*}
which agrees with \eqref{119}.  This completes the proof of \eqref{117} and of Theorem \ref{theorem2}.

\appendix
\section{Levinson's Method}

This is all about estimating the number of zeros in segments of the critical line for $L$-functions having Euler product
and satisfying suitable functional equations.  In this section we are going to sketch the basic ideas of Levinson's
method \cite{L}.

Let $L(s,f)$ be given by the Dirichlet series
\begin{equation}\label{A1}
L(s,f) = \sum_{1}^{\infty}\lambda_{f}(n)n^{-s}
\end{equation}
which converges absolutely in $\Re{s} > 1$ and it has the Euler product of degree $d$, that is
\begin{equation}\label{A2}
L(s,f) = \prod_{p}(1 - \alpha_{1}(p)p^{-s})^{-1}\cdots (1 - \alpha_{d}(p)p^{-s})^{-1}.
\end{equation}
Therefore the coefficients $\lambda_{f}(n)$ are multiplicative.  Moreover we assume that $L(s,f)$ has analytic continuation
to the whole complex $s$-plane with at most one pole at $s=1$ of order $\leq d$.  Next there is a corresponding local factor
at $p = \infty$, say $\gamma(s) = \gamma(s,f)$ which is given by the product of $d$ gamma functions of the following type
\begin{equation}\label{A3}
\gamma(s) = \pi^{-ds/2}\Gamma(\frac{s+\kappa_{1}}{2})\cdots\Gamma(\frac{s+\kappa_{d}}{2})
\end{equation}
with the parameters $\kappa_{j}$ having $\Re{\kappa_{j}}> -1/2$ and the non-real ones occur in complex conjugate pairs.
In addition to the above data there is a conductor $q = q(f)$ which is a positive integer and a root number
$\eps = \eps(f)$ which is a complex number with $|\eps| =1$.  We shall write $\eps = \overline{\eta}/\eta$ with $\eta \in \C^{*}$.
Having all the above factors we assume that the following functional equation holds
\begin{equation}\label{A4}
\eta X(s)L(s,f) = \overline{\eta}X(1-s)L(1-s,g)
\end{equation}
where
\begin{equation}\label{A5}
X(s) = q^{s/2}\gamma(s)
\end{equation}
and $L(s,g)$ is the $L$-function with coefficients $\lambda_{g}(n) = \overline{\lambda_{f}}(n)$.

The fundamental question is where are the zeros of $L(s,f)$?  Since $X(s)$ never vanishes the zeros $\rho_{f}$ of
$L(s,f)$ in the strip $0 \leq \Re{s} \leq 1$ correspond to the zeros $\rho_{g} = 1 -\rho_{f}$ of $L(s,g)$.  The
Riemann Hypothesis, if true, would say that $\rho_{g} = \overline{\rho_{f}}$.

Let $N(T,f)$ denote the number of all zeros $\rho = \beta + i\gamma$ of $L(s,f)$ with $0 \leq\beta \leq 1$, $|\gamma| \leq T$,
each one counted with the multiplicity equal to its order.  Let $N_{0}(T,f)$ denote the number of these zeros with $\beta =1/2$.
Following the memoir of B. Riemann \cite{R} one can easily derive a quite precise estimate (cf. \cite{IK})
\begin{equation}\label{A6}
N(T,f) = \frac{dT}{\pi}\log{\frac{T}{2\pi e}}+ \frac{T}{\pi}\log{q} + O(\log{qT})
\end{equation}
for all $T\geq 2$, the implied constant depending on the local parameters $\kappa_{1},\ldots,\kappa_{d}$.  It is
important to realize that the first part of \eqref{A6} comes from (approximately equal to) the variation of the
argument of $\gamma(s)$ over the vertical segment $s = -\eps+it$, $|t| \leq T$, while the second part is the variation
of the argument of $q^{-s/2}$.  Hence one knows that an overwhelming majority of zeros accounted by $N(T,f)$ are
captured by analytic behaviour of the single factor $X(s)$.  The variation of finite places in the Euler product
contribute very little to counting all the zeros.  However they do play a role in our counting the critical zeros,
though not by variation of arguments, but indirectly in the construction of a mollifier.

Levinson's method begins by writing the functional equation in the following form
\begin{equation}\label{A7}
\eta Y(s)X(s)L(s,f) = \eta X(s)G(s,f) + \overline{\eta}X(1-s)G(1-s,g)
\end{equation}
where $Y(s) = Y(1-s)$ is a simple function having only a few zeros.  For example we can arrange \eqref{A4} in the form
\begin{equation}\label{A8}
2\eta X(s)L(s,f) = \eta X(s)L(s,f) + \overline{\eta}X(1-s)L(1-s,g)
\end{equation}
which is a case of \eqref{A7} with $Y(s) = 2$ and $G(s,f) = L(s,f)$.  However, this simple arrangement yields poor results.
Of course, the $G$-function in \eqref{A7} is not defined uniquely.  Good results come out from \eqref{A7} with
$G(s,f)$ judiciously chosen.  We shall search for $G(s,f)$ in the class of Dirichlet series
\begin{equation}\label{A9}
G(s,f) = \sum_{1}^{\infty}a_{f}(n)n^{-s}.
\end{equation}
An advantage of such kind $G(s,f)$ is that one can control the variation of argument quite well on the lines of absolute
convergence.  Contrary, for example, the arrangement offered by the Riemann--Siegel formula \cite{Si} (a kind of approximate
functional equation) is not so good because the truncation of the relevant series is sharp at the point which depends
on the variable $s$.  Many similar functional equations can be developed which feature smooth decay transition, however
with coefficients depending on $s$, thus making it harder for mollification.

Excellent choices of $G(s,f)$ are proposed in \cite{C1}.  These are linear combinations of
$L(s,f)$, $L'(s,f)$, $L''(s,f),\ldots$.  For example one may take (the original one of Levinson)
\begin{equation}\label{A10}
G(s,f) = L(s,f) + \lambda L'(s,f)
\end{equation}
where $\lambda$ is a constant at our disposal.  Indeed \eqref{A7} holds for $G(s,f)$ given by \eqref{A10} with
\begin{equation}\label{A11}
Y(s) = 2 -\lambda\frac{X'}{X}(s)-\lambda\frac{X'}{X}(1-s).
\end{equation}
To check this, take the logarithmic derivative of \eqref{A4} and combine the resulting equation with \eqref{A8}.

The combination \eqref{A10} is particularly attractive for
\begin{equation}\label{A12}
\lambda = (\log{N})^{-1}
\end{equation}
where $N$ just exceeds the analytic conductor $q(|t|+3)^{d}$.  In this case, one can truncate the series for $L(s,f)$ and
$L'(s,f)$ at $n = N$ with very small error term.  Hence $G(s,f)$ is very well approximated by
\begin{equation}\label{A13}
G_{N}(s,f) = \sum_{n \leq N}\lambda_{f}(n)\Big(1 - \frac{\log{n}}{\log{N}}\Big)n^{-s}.
\end{equation}

Now we return to the general setting \eqref{A7}.  Observe that for $s$ on the line $\Re{s} = 1/2$ the equation
\eqref{A7} reads as
\begin{equation}\label{A14}
\eta Y(s)X(s)L(s,f) = 2\Re{\eta X(s)G(s,f)}.
\end{equation}
Hence $\Re{\eta X(s)G(s,f)} = 0$ if and only if $Y(s)L(s,f) = 0$.  In other words $s=1/2 + i\gamma$ is a critical
zero of $L(s,f)$ if and only if either $G(s,f) = 0$, or
\begin{equation}\label{A15}
G(s,f) \neq 0, \quad \arg{\eta X(s)G(s,f)} \equiv \pi/2 \pmod{\pi}
\end{equation}
except for a few zeros of $Y(s)$.  Suppose $Y(s)$ has at most $O(\log{qT})$ zeros on the segment
$\mathcal{C} = \{s = 1/2+it; |t|\leq T\}$.  Note that $Y(s)$ given by \eqref{A11} does satisfy this condition.
This can be verified by using Stirling's formula for $X'(s) / X(s)$.

Observe that for every change of $\pi$ in the argument of some function $f(z)$ it must be the case that $\Re{f(z)}$ has at
least one zero.  Hence it follows that
\begin{equation}\label{A16}
N_{0}(T,f) \geq \frac{1}{\pi}\Delta_{\mathcal{C}}\arg{X(s)G(s,f)} + O(\log{qT})
\end{equation}
where $\Delta_{\mathcal{C}}\arg$ stands for the variation of the argument as $s$ runs over the segment $\mathcal{C}$
from $1/2 - iT$ to $1/2+iT$ passing the zeros of $G(s,f)$ from the east side.  Actually the lower bound \eqref{A16} is for
the number $N^{'}_{0}(T,f)$ of simple zeros of $L(s,f)$ on $\mathcal{C}$, because if $s$ is a double zero then $G(s,f) = 0$
(see \eqref{A10}),

It is quick to show by Stirling's formula that
\begin{eqnarray*}
\frac{1}{\pi}\Delta_{\mathcal{C}}\arg{X(s)} &=& \frac{dT}{\pi}\log{\frac{T}{2\pi e}} + \frac{T}{\pi}\log{q} + O(1)\\
& = & N(T,f) + O(\log{qT}).
\end{eqnarray*}
Hence
\begin{equation}\label{A17}
N_{0}(T,f) \geq N(T,f) + \frac{1}{\pi}\Delta_{\mathcal{C}}\arg{G(s,f)} + O(\log{qT}).
\end{equation}

Next let $R$ be the closed rectangle whose left side is $\mathcal{C}$ and the right side is sufficiently far.
Let $\mathcal{R} = \partial R$ denote the boundary of $R$.  By standard techniques (for example see the proof of
Theorem 5.8 of \cite{IK}) one can show that the variation of argument of $G(s,f)$ on $\mathcal{R}\setminus\mathcal{C}$
is bounded by $O(\log{qT})$ so
$$\Delta_{\mathcal{C}}\arg{G(s,f)} = \Delta_{\mathcal{R}}\arg{G(s,f)} + O(\log{qT}).$$
Now
$$-\frac{1}{2\pi}\Delta_{\mathcal{R}}\arg{G(s,f)} = N_{R}(G), \quad \text{say},$$
is just the number of zeros of $G(s,f)$ inside the rectangle $R$ (the minus sign because of the clockwise orientation
of $\mathcal{R}$).  Hence
\begin{equation}\label{A18}
N_{0}(T,f) \geq N(T,f) - 2N_{R}(G) + O(\log{qT}).
\end{equation}

A few words of reflection are due at this moment.  First of all we came back in \eqref{A18} to a problem of counting zeros,
now those of $G(s,f)$ in the rectangle rather than those of $L(s,f)$ on the line.  Moreover we need an upper bound for
$N_{R}(G)$ to get a lower bound for $N_{0}(T,f)$.  The new task is definitely easier because it depends essentially on
estimates for the relevant analytic functions.  However one cannot guarantee success upfront.  There is a risk of losing a
large constant factor in the upper bound for $N_{R}(G)$ and the whole work is vein.

Clearly $N_{R}(G)$ can only increase if we replace $G(s,f)$ by
\begin{equation}\label{A19}
F(s,f) = G(s,f)M(s,f)
\end{equation}
where $M(s,f)$ is any regular function in the rectangle $R$.  This extra factor may add zeros, but hopefully not a lot.
On the other hand $M(s,f)$ is designed to dampen extra large values of $G(s,f)$ so the product $F(s,f)$ has more steady
behaviour than $G(s,f)$.  Consequently, counting zeros of $F(s,f)$ by classical methods of contour integration becomes
plausible.

Specifically we are going to apply the well-known formula of Littlewood \cite{Li}
\begin{equation}\label{A20}
\Re{\Big(\frac{1}{2\pi i}\int_{\partial D}\log{F(s)}ds\Big)} = \sum_{\rho \in D}\text{dist}(\rho).
\end{equation}
Here $\log{F(s)}$ is a continuous branch of logarithm,
$$\log{F(s)} = \log{|F(s)|} + i\arg{F(s)},$$
where the argument is defined by continuous variation going counter-clockwise.  This holds for a regular function $F(s)$
in a rectangle $D$, not vanishing on $\partial D$, where $\rho$ runs over the zeros of $F(s)$ and $\text{dist}(\rho)$ denotes
the distance of $\rho$ to the left side of $D$.

For our application we take $D$ somewhat wider that $R$ so the zeros in $R$ have an ample distance to the left side of $D$.
Specifically we expand $R$ by moving its left side at $\Re{s} = 1/2$ to $\Re{s} = \sigma$ with $\sigma <1/2$.  Then for
every $\rho \in R$ we have $\text{dist}(\rho) \geq 1/2 - \sigma$, so \eqref{A20} yields
\begin{eqnarray}\label{A21}\nonumber
(\frac{1}{2}-\sigma)N_{R}(G) & \leq & (\frac{1}{2}-\sigma)N_{R}(F)\\
& \leq & \Re{\Big(\frac{1}{2\pi i}\int_{\partial D}\log{F(s)}ds\Big)}.
\end{eqnarray}
The integration over the left side of $D$ yields exactly
\begin{equation}\label{A22}
\frac{1}{2\pi}\int_{-T}^{T}\log{|F(\sigma + it)|}dt.
\end{equation}
The contribution of the integration over the remaining parts of $\partial D$ can be estimated by $O(\log{qT})$.  This
requires some conditions on the mollifier.  Assume that $M(s)$ is given by a Dirichlet polynomial
\begin{equation}\label{A23}
M(s) = \sum_{m \leq X}c(m)m^{-s}
\end{equation}
of length $X$ (nothing to do with the function $X(s)$ in \eqref{A5}) and coefficients $c(1) = 1$, $c(m) \ll m$.
Assume $\log{X} \ll \log{qT}$.  Then
$$\log{M(s)} = \sum_{m=2}^{\infty}\alpha(m)m^{-s}$$
with $\alpha(m) \ll m^{2}$, so the series converges absolutely for $\Re{s} \geq 3$.  Hence
$$\frac{1}{2\pi i}\int_{4 - iT}^{4+iT}\log{M(s)}ds \ll \sum_{m=2}^{\infty}\frac{|\alpha(m)|}{m^{4}\log{m}} \ll 1.$$
Moreover $M(s) \ll X^{2}$ in $D$, so the real part of integrals over the horizontal segments (the integrals of
$\arg{F(\alpha+ iT)}$ and $\arg{F(\alpha-iT)}$) are bounded by $O(\log{TX})$.

Collecting these estimates we get by \eqref{A21}
\begin{equation}\label{A24}
(\frac{1}{2}-\sigma)N_{R}(G)  \leq  \frac{1}{2\pi}\int_{-T}^{T}\log{|F(\sigma + it)|}dt +O(\log{qT}).
\end{equation}
Finally by \eqref{A18} and \eqref{A22} we arrive at
\begin{proposition1}\label{propositionA}
Let $L(s,f)$ be an $L$-function of degree $d$ and conductor $q$ which satisfies the functional equation in the form
\eqref{A7} with $G(s,f)$ given by a Dirichlet series \eqref{A9}.  Let $M(s,f)$ be a Dirichlet polynomial of length
$X \ll (qT)^{A}$ given by \eqref{A23}.  Then
\begin{equation}\label{A25}
N_{0}(T,f) \geq N(T,f) - \frac{1}{\pi(\frac{1}{2}-\sigma)}\int_{-T}^{T}\log{|F(\sigma + it,f)|}dt +O(\log{qT})
\end{equation}
where $F(s,f) = G(s,f)M(s,f)$ and $0 < \sigma < 1/2$.  The implied constant in the error term $O(\log{qT})$
depends on the local parameters $\kappa_{1},\ldots,\kappa_{d}$.
\end{proposition1}

One can generalize Proposition \ref{propositionA} for a family of $L$-functions.  Suppose for every $f \in \mathcal{F}$ we
have $L(s,f)$ of the same degree $d$ and of various conductors $q$, but of the same order of magnitude, say
\begin{equation}\label{A26}
q \asymp Q.
\end{equation}
Denote
\begin{equation}\label{A27}
N(T,\mathcal{F}) = \sum_{f \in \mathcal{F}}c_{f}N(T,f)
\end{equation}
\begin{equation}\label{A28}
N_{0}(T,\mathcal{F}) = \sum_{f \in \mathcal{F}}c_{f}N_{0}(T,f),
\end{equation}
where $c_{f}$ are positive numbers with
\begin{equation}\label{A29}
\sum_{f \in \mathcal{F}}c_{f} = 1.
\end{equation}
Then Proposition \ref{propositionA} yields
\begin{equation}\label{A30}
N_{0}(T,\mathcal{F}) \geq N(T,\mathcal{F}) - \frac{2T}{\pi(\frac{1}{2}-\sigma)}\mathcal{J}(T,\mathcal{F}) +O(\log{QT})
\end{equation}
where
\begin{equation}\label{A31}
\mathcal{J}(T,\mathcal{F}) = \frac{1}{2T}\int_{-T}^{T}\sum_{f \in \mathcal{F}}c_{f}\log{|F(\sigma+it,f)|}dt.
\end{equation}

Let us introduce the so called analytic conductor of the family $\mathcal{F}$ by
\begin{equation}\label{A32}
N = QT^{d}.
\end{equation}
Then \eqref{A6} gives
\begin{equation}\label{A33}
N(T,f) = \frac{T}{\pi}(\log{N})\Big(1 + O\Big(\frac{1}{\log{T}}\Big)\Big)
\end{equation}
for every $f \in \mathcal{F}$.  Note that \eqref{A6} and \eqref{A33} are valuable results only for $T$ sufficiently large.
If $T$ is bounded we have no results, even if the conductor $q$ is large.  Hence $N(T,\mathcal{F})$ also satisfies
\eqref{A33} so \eqref{A30} implies
\begin{equation}\label{A34}
N_{0}(T,\mathcal{F}) \geq \Big(1 - \frac{2\mathcal{J}(T,\mathcal{F})}{(\frac{1}{2}-\sigma)\log{N}}
+ O\Big(\frac{1}{\log{T}}\Big)\Big)N(T,\mathcal{F}).
\end{equation}
In practice a good value for $\sigma$ is close to $1/2$, namely
\begin{equation}\label{A35}
\sigma = \frac{1}{2} - \frac{R}{\log{N}}
\end{equation}
where $R$ is a positive constant.  For this $\sigma$ the bound \eqref{A34} becomes
\begin{equation}\label{A36}
N_{0}(T,\mathcal{F}) \geq \Big(1 - \frac{2}{R}\mathcal{J}(T,\mathcal{F})
+ O\Big(\frac{1}{\log{T}}\Big)\Big)N(T,\mathcal{F}).
\end{equation}

Now the question is how to estimate $\mathcal{J}(T,\mathcal{F})$?  By the convexity of the logarithm function we get
\begin{equation}\label{A37}
\mathcal{J}(T,\mathcal{F}) \leq \log{\mathcal{K}(T,\mathcal{F})}
\end{equation}
where
\begin{equation}\label{A38}
\mathcal{K}(T,\mathcal{F}) = \frac{1}{2T}\int_{-T}^{T}\sum_{f \in \mathcal{F}}c_{f}|F(\sigma+it,f)|dt.
\end{equation}

There are various possibilities to estimate $\mathcal{K}(T,\mathcal{F})$.  Recall that $F(s,f)$ is given by a Dirichlet
series so there is a great deal of technology available to address the issue.  Particularly the technology is well developed
for handling the second power moments.  Therefore we apply the Cauchy--Schwarz inequality
\begin{equation}\label{A39}
\mathcal{K}(T,\mathcal{F}) \leq \mathcal{L}(T,\mathcal{F})^{1/2}
\end{equation}
and reduce the problem to estimation of
\begin{equation}\label{A40}
\mathcal{L}(T,\mathcal{F}) = \frac{1}{2T}\int_{-T}^{T}\sum_{f \in \mathcal{F}}c_{f}|F(\sigma+it,f)|^{2}dt.
\end{equation}
Applying the above inequalities to \eqref{A36} we arrive at
\begin{corollary1}\label{corollaryA}
Suppose the conditions of Proposition \ref{propositionA} hold for every $f$ in the family $\mathcal{F}$.  Let $\sigma$ be given
by \eqref{A35}.  Then
\begin{equation}\label{A41}
N_{0}(T,\mathcal{F}) \geq \Big(1 - \frac{1}{R}\log{\mathcal{L}(T,\mathcal{F})}
+ O\Big(\frac{1}{\log{T}}\Big)\Big)N(T,\mathcal{F}),
\end{equation}
where $\mathcal{L}(T,\mathcal{F})$ is the mean value of $|F(\sigma+it,f)|^{2}$ given in \eqref{A40} and
$F(s,f) = G(s,f)M(s,f)$.
\end{corollary1}

\begin{remark1}
The lower bound (\ref{A41}) remains true for $N^{'}_0(T,f)$ in place of $N_0(T,f)$.
\end{remark1}

\end{document}